\documentclass[11pt]{article}
\usepackage[utf8]{inputenc}
\usepackage[T1]{fontenc}
\usepackage{latexsym,amssymb,amsmath,amsfonts,amsthm}
\usepackage{graphics}
\usepackage[section]{placeins}
\usepackage{graphicx}
\usepackage{mathrsfs}
\usepackage{subfigure}
\usepackage{float}
\usepackage{color}
\usepackage{mathtools}
\usepackage{tikz}
\usetikzlibrary{calc,intersections}
\usepackage{geometry}        
\usepackage{caption}         
\usepackage{enumitem}
\setlist[enumerate]{label=(\arabic*), font=\textup}
\usepackage[ruled]{algorithm2e}
\usepackage{float}
\usepackage{cleveref}
\usepackage{cases}
\usepackage{changepage}
\usepackage{zhlipsum}
\usepackage{epstopdf}
\newcommand{\R}{{\mathbb R}}

\newcommand{\be}{\begin{eqnarray}}
\newcommand{\ben}{\begin{eqnarray*}}
\newcommand{\en}{\end{eqnarray}}
\newcommand{\enn}{\end{eqnarray*}}

\newtheorem{theorem}{Theorem}[section]

\newtheorem{lemma}[theorem]{Lemma}

\definecolor{mgq}{rgb}{1,0,0}
\definecolor{ghx}{rgb}{0,0,0}
\definecolor{rot1}{rgb}{0,0,0}
\definecolor{rot}{rgb}{0,0,0}
\definecolor{rot2}{rgb}{0,0,0}
\definecolor{mgq1}{rgb}{0,0,0}
\definecolor{mgq2}{rgb}{0,0,0}

\begin{document}
	\renewcommand{\theequation}{\arabic{section}.\arabic{equation}}
	
	\title{A novel sampling method for reconstruction of a moving point acoustic source in $\mathbb{R}^3$}
	
	\author{Guanqiu Ma\footnotemark[3], Shuang Li\footnotemark[2], and Hongxia Guo\footnotemark[1] }

	\date{}
	\maketitle
	
	\renewcommand{\thefootnote}{\fnsymbol{footnote}}
	\footnotetext[3] {School of Mathematical Sciences, Sichuan Normal University, Chengdu, 610068, China (gqma@sicnu.edu.cn).}
	\footnotetext[2] {School of Mathematical Sciences, Sichuan Normal University, Chengdu, 610068, China (double@stu.sicnu.edu.cn).}
	\footnotetext[1]{Corresponding author: School of Mathematical Sciences, and Institute of Mathematics and Interdisciplinary Sciences, Tianjin Normal University, Tianjin, 300387, China (hxguo@tjnu.edu.cn).}

	\renewcommand{\thefootnote}{\arabic{footnote}}

	\begin{abstract}
		In this paper, we introduce a novel sampling method to recovering the trajectory of a moving point source in $\R^3$, where both the spatial location and emission moment of the moving point source are unknown. Combining algebraic theory with geometric knowledge, we prove the uniqueness of the source location by using measured data from five observation points. Our sampling method constructs an indicator function based on the property that the residual of the time difference of arrival constraint formula vanishes at the true source location. It achieves the reconstruction of the spatial positions and emission moments of a moving point source only using data from five observation points and their corresponding arrival times. The algorithm not only reduces the required number of observation points, but also improves computational efficiency, stability, and noise resistance. Numerical experiments verify the effectiveness of the method.
		
		\vspace{.2in} {\bf Keywords}: {\bf inverse moving source problem; uniqueness; sampling method; time difference of arrival}
	\end{abstract}

	\section{Introduction}
	
	In a homogeneous and isotropic medium occupying the whole space $\mathbb{R}^3$, we consider the dynamic wave propagation problem generated by a moving point source, where the wave speed $c > 0$ is constant. The source trajectory is denoted by $a(t) \in C(t_{\min}, t_{\max}) $, with $0 < t_{\min} < t_{\max}$, and {the source moves}
	at subsonic speed, satisfying
	\begin{equation}\label{speed-p}
		|a'(t)| < c, \quad  t \in [t_{\min}, t_{\max}].
	\end{equation}
	During the observation process, the moving point source emits a finite sequence of pulses described by
	\begin{equation}
		S(x,t) = \sum_{k=1}^{K} \delta(x-a(t_k)) \delta(t-t_k) \ell(t_k), \quad t_{\min} \leq t_1 < \cdots < t_K \leq t_{\max},
	\end{equation}
	where $\delta$ denotes the Dirac delta distribution. The real-valued continuous function $\ell(t)$ modulates the pulse intensity and satisfies $|\ell(t)| \geq \ell_0 >0$, which ensures that each emitted pulse is non-vanishing.
	
	The radiated acoustic field $U(x,t)$ satisfies the initial value problem 
	\begin{equation}
		\left\{
		\begin{aligned}
			&c^{-2}\frac{\partial^2 U}{\partial t^2} = \Delta U + S(x,t), \quad &&(x, t) \in \mathbb{R}^3 \times \mathbb{R}^+, \mathbb{R}^+ \coloneqq \{t\in \mathbb{R}: t>0\},\\
			&U(x,0)=\partial_t U(x,0) = 0, &&x\in \mathbb{R}^3.
		\end{aligned}
		\right.
	\end{equation}
	
	By convolving the source term $S(x,t)$ with the fundamental solution $G(x;t,c) := \frac{\delta(t-c^{-1}|x|)}{4\pi |x|}$, the explicit solution is given by
	\begin{align*}
		U(x,t)& = G(x;t,c) * S(x,t) \coloneqq \int_{\mathbb{R}^+} \int_{\mathbb{R}^3}
		G(x-y; t-\tau,c)S(y,\tau)\,dyd\tau\\
		&=\sum\limits_{k=1}^{K} \frac{\delta (t-t_k-c^{-1}|x-a(t_k)|)}{4\pi |x-a(t_k)|} \ell(t_k), \quad x \notin \Gamma,
	\end{align*}
	where $\Gamma \coloneqq \{x: x=a(t),\, t\in [t_{\min}, t_{\max}]\}$. According to Huygen's principle, for a given receiver location $\tilde{x}_i$, the signal is observed only at discrete \textbf{Times of Arrival}:
	\begin{equation}
		T_{ki} = t_k + c^{-1}|\tilde{x}_i-a(t_k)|,\quad k=1,\cdots,K \text.
	\end{equation}
	
	In this paper we are interested in the following inverse problem:
	\begin{description}
		\item[(IP):] Recovery the trajectory $\Gamma$ and emission moments $t_k,\, k=1,\cdots,K,$ using the time of arrival
		\[\{T_{ki}: k=1,2,\cdots,K; \, i=1,\cdots,I\}\]
		at the observation points $\{\tilde{x}_i: i=1,\cdots,I\}$, with $\tilde{x}_i \notin \Gamma$.
	\end{description}
	
	The problem of reconstructing the location and emission moments of a moving point source arises in a wide range of practical applications, including radar localization, navigation, communication transmission, sonar, and seismic monitoring. Numerical methods for inverting moving point sources based on arrival-time measurements can generally be categorized into two approaches: the time of arrival (TOA) method and the time difference of arrival (TDOA) method. The TOA method \cite{FKK1999,S1999} relies on absolute propagation times and requires prior knowledge of the emission moments. Theoretically, using the emission moment and the arrival time data from four non-coplanar observation points, one can construct to recover the position of the source at the emission moment. However, this method imposes stringent requirements on time synchronization between the source and observation points, as calibration errors can degrade inversion accuracy. In contrast, the TDOA method \cite{GM2015,TOY2021} eliminates the need for time synchronization between the source and observation points. It only requires that there be time synchronization among each observation point and offers greater stability in practical applications. Moreover, as long as any bias in the arrival times is consistent across all observation points, the localization result remains unaffected.
	
	Existing literature \cite{CNAS2014,GG2003,LQS2012,WSZG2019,DZY2020,TWC2022} has established a mature theoretical foundation for the TDOA method. These studies indicate that a consensus has been reached regarding TDOA localization in three-dimensional space: four observation points generally lead to non-unique solutions, whereas five points ensure a unique solution \cite{NK2025}. However, existing proofs have overlooked the singular case where the point source is situated on the perpendicular bisector plane of the baseline connecting two observation points. For numerical algorithms, existing methods commonly use least squares to formulate iterative schemes or solve matrix equations for point source localization. The Taylor series expansion method \cite{WHF1976} provides high-precision solutions through iterative refinement, Chan’s algorithm \cite{CH1905} offers a non-iterative closed-form solution based on the Least Squares (LS) criterion, and the Lagrange multiplier method \cite{Su2020} seeks to reduce the sensitivity of iterative approaches \cite{TKI2022} to initial guesses. In recent years, time-domain sampling methods \cite{Li2026} have demonstrated the capability to recover the spatial support of various sources using initial arrival times. However, research on sampling algorithms specifically utilizing TDOA measurements is limited. 
	To address these issues, this paper provides, from algebraic and geometric perspectives, a rigorous proof of the non-uniqueness with four observation points and the uniqueness with five points in three-dimensional space. Meanwhile, the indicator function is constructed to reconstruct the position and emission moment of the moving point source using the sampling method for five observation points.
	
	The remainder of the paper is organized as follows. In Section \ref{uniq}, the uniqueness for the proposed inverse problem is proved. In section \ref{alg}, the sampling algorithm for inverting the moving point source is introduced. Numerical experiments validating our method are reported in Section~\ref{num}. Finally, the paper is concluded and possible directions for future work are outlined in Section~\ref{con}.
	
	\section{Uniqueness}\label{uniq}
	Before proving uniqueness, we must first explain that the ordering of the arrival times is determined under the assumption \eqref{speed-p}.
	\begin{lemma}\label{time-s}
		Let $\tilde{x}_i$ be a fixed observation point and the source following a trajectory $a(t)$ satisfies the assumption \eqref{speed-p}. Then, the arrival times satisfy
		\begin{equation*}
			T_{ki} > T_{li} \quad \mbox{if and only if} \quad t_k>t_l.
		\end{equation*}
	\end{lemma}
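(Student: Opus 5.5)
The plan is to show that, at the fixed receiver $\tilde{x}_i$, the arrival time is a strictly increasing function of the emission time. Define
\[
f(t) := t + c^{-1}|\tilde{x}_i - a(t)|, \qquad t\in[t_{\min},t_{\max}],
\]
so that $T_{ki}=f(t_k)$. Once $f$ is strictly increasing, both implications follow at once. If $t_k>t_l$, then $T_{ki}=f(t_k)>f(t_l)=T_{li}$. Conversely, if $T_{ki}>T_{li}$, then $t_k\neq t_l$, and $t_k<t_l$ would force $T_{ki}<T_{li}$; hence $t_k>t_l$.

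The core step is strict monotonicity, proved through a Lipschitz-type estimate rather than differentiation of the norm. This avoids any trouble where $a(t)=\tilde{x}_i$; that case is in any event excluded because $\tilde{x}_i\notin\Gamma$. Take $t_l<t_k$. The reverse triangle inequality gives
\[
\bigl||\tilde{x}_i-a(t_k)|-|\tilde{x}_i-a(t_l)|\bigr| \le |a(t_k)-a(t_l)|.
\]
Since \eqref{speed-p} implicitly assumes $a$ is differentiable with $|a'|<c$, the mean value inequality (applied in integral form, $a(t_k)-a(t_l)=\int_{t_l}^{t_k}a'(s)\,ds$) yields
\[
|a(t_k)-a(t_l)|\le \int_{t_l}^{t_k}|a'(s)|\,ds < c\,(t_k-t_l).
\]
Strictness holds because $|a'|<c$ on a nondegenerate interval and $|a'|$ is continuous. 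If only pointwise differentiability is available, I would use a compactness argument, or note that $\sup|a'|<c$ on the compact interval once $a'$ is continuous. Combining the two estimates gives
\[
f(t_k)-f(t_l) \ge (t_k-t_l) - c^{-1}|a(t_k)-a(t_l)| > 0 .
\]

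The main obstacle is only regularity: the paper states $a\in C(t_{\min},t_{\max})$ yet uses $a'$. I will assume $a$ is $C^1$ on $[t_{\min},t_{\max}]$, as \eqref{speed-p} implicitly requires, so that $\max|a'|=v<c$. This gives the clean bound $|a(t_k)-a(t_l)|\le v(t_k-t_l)$ and hence $f(t_k)-f(t_l)\ge (1-v/c)(t_k-t_l)>0$.
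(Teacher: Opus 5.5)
Your proof is correct and follows the same route as the paper: use the triangle inequality to bound $|\tilde{x}_i-a(t_l)|-|\tilde{x}_i-a(t_k)|$ by $|a(t_k)-a(t_l)|$, then use the subsonic bound to get $|a(t_k)-a(t_l)|<c\,(t_k-t_l)$ (the paper argues by contradiction and writes out only the forward implication). Your integral mean value inequality is more careful than the paper's exact mean value equality $|a(t_k)-a(t_l)|=|a'(t)|(t_k-t_l)$, which fails in general for vector-valued $a$, and you also spell out the converse implication that the paper leaves implicit.
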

	
	\begin{proof}
		Suppose that $t_k > t_l$ but $T_{ki} \leq T_{li}$. Consider the difference
		\[T_{li}-T_{ki} = t_l -t_k + c^{-1}(|\tilde{x}_i-a(t_l)|-|\tilde{x}_i-a(t_k)|).\] 
		By the triangular inequality,
		\[\begin{aligned}
			|\tilde{x}_i - a(t_l)| &= |\tilde{x}_i - a(t_k) + a(t_k) - a(t_l)|) \\
			& \leq |\tilde{x}_i - a(t_k)| + |a(t_k) - a(t_l)|.
		\end{aligned}\]
		Hence,
		\[T_{li}-T_{ki} \leq t_l -t_k + c^{-1}|a(t_k) - a(t_l)|.\]
		Since $a(t)$ is continuous, by the mean value theorem, there exists $t \in (t_l, t_k)$ such that
		\[|a(t_k) - a(t_l)| = |a'(t)|(t_k - t_l).\]
		By the subsonic speed $|a'(t)| < c$, we obtain 
		\[\begin{aligned}
			T_{li}-T_{ki} &\leq t_l -t_k + c^{-1}|a'(t)|(t_k - t_l) \\
			&= (t_k - t_l)(c^{-1}|a'(t)| - 1)<0, 
		\end{aligned}\] 
		which contradicts $T_{ki} \leq T_{li}$.
	\end{proof}
	
	Since the arrival times are ordered, the problem of multiple emission moments can be separated into problems of single emission moments. For a fixed emission moment $t_k$, we consider a single point source located at $s = (x, y, z)$.
	
	\begin{theorem}
		The position of the point source $s$ cannot be uniquely determined by the time differences of arrival among four distinct observation points $\tilde{x}_1, \tilde{x}_2, \tilde{x}_3, \tilde{x}_4$ in $\mathbb{R}^3$.
	\end{theorem}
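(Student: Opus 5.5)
Since the statement is a non-uniqueness claim, I read it as asserting that the TDOA data from four observation points do not determine $s$ in general. The plan is to exhibit configurations with two distinct source positions producing identical time differences, and to explain algebraically why this happens generically.

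Write $d_i=|s-\tilde{x}_i|$ and $r_{i1}=c(T_{ki}-T_{k1})=d_i-d_1$ for $i=2,3,4$. The unknowns are $s\in\mathbb{R}^3$ and $d_1\ge 0$. Squaring $d_i=d_1+r_{i1}$ and subtracting $d_1^2=|s-\tilde{x}_1|^2$ gives three equations that are linear in $(s,d_1)$:
\begin{equation*}
2(\tilde{x}_i-\tilde{x}_1)\cdot s + 2r_{i1}d_1 = |\tilde{x}_i|^2-|\tilde{x}_1|^2 - r_{i1}^2,\qquad i=2,3,4.
\end{equation*}
If the four points are non-coplanar, the matrix $A$ with rows $\tilde{x}_i-\tilde{x}_1$ is invertible. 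Hence $s=p+d_1q$ for explicit vectors $p,q$. Substituting into $d_1^2=|p+d_1q-\tilde{x}_1|^2$ gives the quadratic
\begin{equation*}
(|q|^2-1)d_1^2+2q\cdot(p-\tilde{x}_1)\,d_1+|p-\tilde{x}_1|^2=0 .
\end{equation*}
Its constant term is nonnegative and vanishes only when $s=\tilde{x}_1$, which is excluded. So if $|q|>1$, the two roots have the same sign. Whenever the true $d_1$ is a root, the other root is then also positive. One must also check that it satisfies the original unsquared relations $d_i=d_1+r_{i1}\ge0$. The second root then yields a second admissible source $s'=p+d_1'q\neq s$ with the same data, which gives non-uniqueness. (If the points are coplanar, $A$ is singular, and reflecting $s$ across the plane of the four points trivially preserves all distances, so non-uniqueness is immediate.)

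The remaining task is to exhibit an explicit non-coplanar instance with two valid roots, so that the theorem does not rest only on a counting argument. My first attempt is a symmetric configuration: $\tilde{x}_1=0$ and $\tilde{x}_2,\tilde{x}_3,\tilde{x}_4$ on the coordinate axes at equal distance $L$. Taking $s$ far out along a direction where $d_1$ is small relative to the $r_{i1}$ should force $|q|>1$. I will then solve the quadratic numerically and symbolically and verify that both roots give $d_i>0$ with the correct signs, i.e. that the sheets of all three hyperboloids $d_i-d_1=r_{i1}$ match. Equivalently, geometrically, the three hyperboloid sheets intersect in two points.

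I expect the main obstacle to be this admissibility check. Squaring introduces spurious solutions lying on the other sheets, so it is not enough to have two positive roots $d_1$: I must confirm $d_1'+r_{i1}\ge 0$ for each $i$. Here I would choose the explicit example carefully, for instance with all $r_{i1}>0$ so that positivity of $d_1'$ suffices, and then verify the inequality $|q|>1$ directly. Choosing all $r_{i1}>0$ means placing $s$ closest to $\tilde{x}_1$, which makes the check clean.
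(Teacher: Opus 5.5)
Your proposal is correct and follows the same route as the paper, which likewise linearizes the squared TDOA relations into a rank-$3$ system in $(s,r_1)$, parametrizes its solution line, and imposes $r_1^2=|s-\tilde{x}_1|^2$ to obtain a quadratic; your Vieta sign argument and your admissibility check $d_1'+r_{i1}\ge 0$ supply what the paper skips when it simply asserts two different roots (when $|q|<1$ the second root is negative and the source is in fact unique, so the theorem holds only in your ``not in general'' reading). Your symmetric example works, although it is the direction of $s$ rather than its distance that matters: with $L=1$, every $s$ in the open negative octant has all $r_{i1}>0$ and $\sum_i r_{i1}^2>1$, i.e.\ $|q|>1$, and for instance the data $r_{21}=r_{31}=r_{41}=0.8$ are produced both by $s\approx-0.4668\,(1,1,1)$ and by $s'\approx0.0755\,(1,1,1)$, corresponding to the distinct roots $d_1\approx0.8084$ and $d_1'\approx0.1307$.
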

	
	\begin{proof}
		Let $\tilde{x}_1$ be the origin of the coordinate system. Define the distance from the point source to the sensor as $r_i \coloneqq \lvert s - \tilde{x}_i \rvert $, and $R_{ij} \coloneqq r_i - r_j = |s-\tilde{x}_i| - |s-\tilde{x}_j|$ represents the distance difference from the point source to the i-th and j-th observation points. Note that $R_{ij}=|c \Delta t_{ij} |:= |c(T_{ki} - T_{kj})|$. Since $\tilde{x}_1 = (0,0,0)$, $r_1 = |s| = \sqrt{s^Ts}$.  
		For $R_{i1}$, we have $R_{i1} + r_1 = r_i, \, i=2,3, 4$. Substituting $r_i$ and squaring both sides yields 
		\[(R_{i1} + r_1)^2 = s^Ts - 2\tilde{x}_i^Ts + \tilde{x}_i^T\tilde{x}_i.\]
		It can be simplified to 
		\[2\tilde{x}_i^Ts + 2R_{i1}r_1 = \tilde{x}_i^T\tilde{x}_i - R_{i1}^2. \]
		This system can be rewritten in matrix form as
		\begin{equation}\label{matrix}
			Bu = b,
		\end{equation} 
		where
		\[B=
		\begin{bmatrix}
			2\tilde{x}_2^T & 2R_{21} \\
			2\tilde{x}_3^T & 2R_{31} \\
			2\tilde{x}_4^T & 2R_{41}
		\end{bmatrix}, 
		\quad 
		u = 
		\begin{bmatrix}
			s \\
			r_1
		\end{bmatrix}, 
		\quad \mbox{and }
		b = 
		\begin{bmatrix}
			\tilde{x}_2^T\tilde{x}_2 - R_{21}^2\\
			\tilde{x}_3^T\tilde{x}_3 - R_{31}^2\\
			\tilde{x}_4^T\tilde{x}_4 - R_{41}^2
		\end{bmatrix}.\]
		Since the vectors $\tilde{x}_2, \tilde{x}_3, \tilde{x}_4$ are not collinear, $\text{rank}(B) = 3$. Therefore, from formula~\eqref{matrix}, we obtain $u = u_0 + hv$, where $s = s_0 + hv_s$ and $r_1 = r_{1,0} + hv_r$ for $h \in \mathbb{R}$. Imposing the geometric constraint $r_1^2 = s^T s = x^2 + y^2 + z^2$ leads to
		\[(r_{1,0} + hv_r)^2 = (x_0 + hv_x)^2 + (y_0 + hv_y)^2 + (z_0 + hv_z)^2.\] 
		Simplification yields a quadratic equation in $h$:
		\[ah^2 + 2bh +c = 0,\]
		where $a = v_x^2 + v_y^2 + v_z^2 - v_r^2, \; b = x_0v_x + y_0v_y + z_0v_z - r_{1,0}v_r,$ and $c = x_0^2 + y_0^2 + z_0^2 - r_{1,0}^2$.
		Solving this quadratic formula gives two different roots $h_1$ and $h_2$. Substituting back into $u = u_0 + hv$ yields two source positions $s_1$ and $s_2$. Therefore, the point source position cannot be uniquely determined.
	\end{proof}

	To obtain a unique solution, we add another observation point and then discuss the uniqueness.
	
	\begin{lemma}
		Let $\tilde{x}_1, \tilde{x}_2 \in \mathbb{R}^3$ be two observation points, s be the source, and $c \geq 0$ be the wave speed. Denote $\Delta t_{12} = T_{k1} - T_{k2}$ as the time difference of arrival from the source to $\tilde{x}_1$ and $\tilde{x}_2$. 
		
		$(1)$ If $|\Delta t_{12}| = 0$, then the source $s$ lies on the perpendicular bisector plane of the line segment $\tilde{x}_1\tilde{x}_2$.
		
		$(2)$ If $|\Delta t_{12}| = \frac{|\tilde{x}_1 - \tilde{x}_2|}{c}$, then $s$ lies on the line through $\tilde{x}_1$ and $\tilde{x}_2$.
		
		$(3)$ If $0 < |\Delta t_{12}| < \frac{|\tilde{x}_1 - \tilde{x}_2|}{c}$, then the formula $|s - \tilde{x}_1| - |s - \tilde{x}_2| = c \Delta t_{12}$ uniquely determines one branch of a two-sheet hyperboloid with foci $\tilde{x}_1$, $\tilde{x}_2$.
	\end{lemma}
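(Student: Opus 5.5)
The plan is to translate the time difference into a difference of distances and then read off the geometry case by case. For a single emission moment $t_k$ we have $T_{ki} = t_k + c^{-1}|\tilde{x}_i - s|$. Subtracting the two arrival times eliminates the unknown $t_k$ and gives
\[
|s-\tilde{x}_1| - |s-\tilde{x}_2| = c\,\Delta t_{12}.
\]
All three cases are statements about the level sets of the function $f(s) = |s-\tilde{x}_1| - |s-\tilde{x}_2|$. We assume $c>0$, as in the setting of the paper, and $d := |\tilde{x}_1 - \tilde{x}_2| > 0$. By the triangle inequality $|f(s)| \le d$, so only the three listed regimes can occur.

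\textbf{Case (1).} Here $f(s)=0$, so $|s-\tilde{x}_1| = |s-\tilde{x}_2|$. Squaring gives
\[
2(\tilde{x}_2-\tilde{x}_1)^T s = |\tilde{x}_2|^2 - |\tilde{x}_1|^2 .
\]
This is exactly the plane through the midpoint $\tfrac12(\tilde{x}_1+\tilde{x}_2)$ with normal $\tilde{x}_2 - \tilde{x}_1$, i.e.\ the perpendicular bisector plane.

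\textbf{Case (2).} Here $|f(s)| = d$. This is equality in the triangle inequality
\[
|s-\tilde{x}_1| \le |s-\tilde{x}_2| + |\tilde{x}_2-\tilde{x}_1|
\]
(or in its symmetric version). Equality in the Euclidean triangle inequality holds only when the vectors $s-\tilde{x}_2$ and $\tilde{x}_2-\tilde{x}_1$ are nonnegatively parallel. Hence $s$ lies on the line through $\tilde{x}_1,\tilde{x}_2$, in fact on the ray beyond $\tilde{x}_2$ (respectively beyond $\tilde{x}_1$). I will state the ray refinement and remark that it implies the lemma's claim.

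\textbf{Case (3).} Here $0<|c\Delta t_{12}|<d$. Choose coordinates with $\tilde{x}_1 = (-\tfrac d2,0,0)$ and $\tilde{x}_2 = (\tfrac d2,0,0)$, which is legitimate because $f$ is invariant under rigid motions. Set $\alpha = \tfrac12 c\Delta t_{12}$. Starting from $|s-\tilde{x}_1| = 2\alpha + |s-\tilde{x}_2|$, I square twice in the standard way. This yields
\[
\frac{x^2}{\alpha^2} - \frac{y^2+z^2}{\beta^2} = 1, \qquad \beta^2 = \frac{d^2}{4}-\alpha^2 > 0 .
\]
Here $\beta^2>0$ is exactly where the strict inequality $|c\Delta t_{12}|<d$ is used, and $\alpha \neq 0$ uses the lower bound. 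So the surface is a two-sheeted hyperboloid with foci $\tilde{x}_1, \tilde{x}_2$.

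The main point requiring care is that squaring loses the sign of $c\Delta t_{12}$. The hyperboloid equation depends only on $\alpha^2$, so it describes the union $\{f = 2\alpha\} \cup \{f=-2\alpha\}$. I will show that $f$ has a fixed sign on each sheet. On the sheet $x\ge|\alpha|$, expanding gives
\[
|s-\tilde{x}_{1,2}|^2 = x^2 \pm dx + \frac{d^2}{4} + y^2 + z^2 .
\]
Substituting $y^2+z^2 = \beta^2(x^2/\alpha^2 - 1)$ turns this into the perfect squares
\[
|s-\tilde{x}_{1,2}| = \frac{d}{2|\alpha|}\,x \pm |\alpha| .
\]
Hence $f = 2|\alpha|>0$ on the sheet $x\ge|\alpha|$, and symmetrically $f = -2|\alpha|$ on the sheet $x \le -|\alpha|$. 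Therefore the sign of $\Delta t_{12}$ selects exactly one sheet: the one nearer $\tilde{x}_2$ when $\Delta t_{12}>0$. This proves that the equation determines precisely one branch, as claimed.
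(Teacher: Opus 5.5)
Your proposal is correct and follows the same case-by-case route as the paper: the triangle-inequality bound on $|s-\tilde{x}_1|-|s-\tilde{x}_2|$, equal distances giving the bisector plane, the equality case giving collinearity, and the sign of $\Delta t_{12}$ selecting one sheet of the hyperboloid. You make rigorous what the paper only asserts, notably the perfect-square computation showing $f\equiv \pm 2|\alpha|$ on the two sheets (the square root is legitimate because $x\ge|\alpha|$ gives $\frac{dx}{2|\alpha|}\ge \frac d2>|\alpha|$), and you correctly use $|f(s)|\le d$, whereas the paper's strict inequality would contradict its own case (2).
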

	
	\begin{proof}
		By the triangular inequality, for any $s \in \mathbb{R}^3$, we have $\big ||s - \tilde{x}_1| - |s - \tilde{x}_2| \big | < |\tilde{x}_1 - \tilde{x}_2|$. 
		
		$(1)$ If $|\Delta t_{12}| = 0$, then $|s - \tilde{x}_1| = |s - \tilde{x}_2| $, so $s$ lies on the perpendicular bisector plane of the line segment $\tilde{x}_1\tilde{x}_2$.
		
		$(2)$ If $|\Delta t_{12}| = \frac{|\tilde{x}_1 - \tilde{x}_2|}{c}$, then $|s - \tilde{x}_1| - |s - \tilde{x}_2| = |\tilde{x}_1 - \tilde{x}_2|$, which implies $s$ lies on the line through $\tilde{x}_1$ and $\tilde{x}_2$.
		
		$(3)$ If $0 < |\Delta t_{12}| < \frac{|\tilde{x}_1 - \tilde{x}_2|}{c}$, then the set of points satisfying the given constant distance difference is a two-sheet hyperboloid. The sign of $\Delta t_{12}$ selects a single branch.
	\end{proof}
	
	Next, we will prove the uniqueness of the point source with five observation points in a specific configuration in $\R^3$.
	
	\begin{theorem}
		Suppose there are five distinct observation points $\tilde{x}_1, \tilde{x}_2, \tilde{x}_3, \tilde{x}_4, \tilde{x}_5$ in $\mathbb{R}^3$. Let $\tilde{x}_1$ be the origin, $\tilde{x}_2 = (a,0,0)$, $\tilde{x}_3 = (-b,0,0)$ with $a,b\in \R^+$, and assume that $\tilde{x}_1, \tilde{x}_2, \tilde{x}_3$ are collinear on the $x$-axis. Let $\tilde{x}_4 = (0,d,0)$ on the $y$-axis and $\tilde{x}_5 = (0,0,f)$ on the $z$-axis, with $d,f \in \mathbb{R}^+$. Then the position of the point source $s$ can be uniquely determined by the time differences of arrival, from which the emission moment $t_k = T_{k1} - c^{-1}|s - \tilde{x}_1|$ can also be uniquely obtained.
	\end{theorem}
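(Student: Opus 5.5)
The plan is to extend the linearization used in the proof of the non-uniqueness theorem for four points. Now there are four linear equations in the four unknowns $(x,y,z,r_1)$, and this particular configuration decouples them. With $\tilde{x}_1=0$, $r_i=|s-\tilde{x}_i|$ and $R_{i1}=r_i-r_1$, the identity $2\tilde{x}_i^Ts+2R_{i1}r_1=\tilde{x}_i^T\tilde{x}_i-R_{i1}^2$ holds for $i=2,\dots,5$. Inserting $\tilde{x}_2=(a,0,0)$, $\tilde{x}_3=(-b,0,0)$, $\tilde{x}_4=(0,d,0)$ and $\tilde{x}_5=(0,0,f)$ gives
\begin{align*}
2a\,x+2R_{21}r_1&=a^2-R_{21}^2, & -2b\,x+2R_{31}r_1&=b^2-R_{31}^2,\\
2d\,y+2R_{41}r_1&=d^2-R_{41}^2, & 2f\,z+2R_{51}r_1&=f^2-R_{51}^2.
\end{align*}
All coefficients are determined by the measured time differences. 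Hence any candidate source $s'$ producing the same data satisfies the same system. It therefore suffices to show that this system, possibly together with the constraint $r_1=|s|$, has only one admissible solution.

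\emph{Nondegenerate case.} The first two equations form a $2\times2$ system in $(x,r_1)$ with determinant $4(aR_{31}+bR_{21})$. If this determinant is nonzero, $x$ and $r_1$ are determined uniquely. Since $d,f>0$, the last two equations then give $y$ and $z$ uniquely. Thus $s$ is unique, and $t_k=T_{k1}-c^{-1}|s|$ follows immediately.

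\emph{Degenerate case.} The main step is to characterize when $aR_{31}+bR_{21}=0$, i.e. $(a+b)r_1=b\,r_2+a\,r_3$. Note that $\tilde{x}_1=\frac{b\tilde{x}_2+a\tilde{x}_3}{a+b}$ is a convex combination of $\tilde{x}_2$ and $\tilde{x}_3$. By convexity of $p\mapsto|s-p|$ (the triangle inequality), we always have $(a+b)r_1\le b r_2+a r_3$. Equality holds iff $s-\tilde{x}_2$ and $s-\tilde{x}_3$ are parallel and point the same way, that is, iff $s$ lies on the $x$-axis outside the open segment $(\tilde{x}_3,\tilde{x}_2)$.

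In that situation $R_{21}=-a$ if $s$ lies on the ray $x\ge a$, and $R_{21}=a$ if $s$ lies on the ray $x\le -b$. So $|\Delta t_{21}|=a/c$, and by part (2) of the preceding lemma the data themselves tell us that $s$ lies on the $x$-axis. The sign of $\Delta t_{21}$ tells us which ray. For $s=(x,0,0)$ on that ray we have $R_{41}=\sqrt{x^2+d^2}-|x|$, which is strictly decreasing in $|x|$. Hence $|x|$, and therefore $s$, is determined uniquely by $R_{41}$.

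Since the degeneracy condition is expressed purely through the data, the two cases are mutually exclusive for all candidate sources simultaneously. This gives uniqueness, and the emission moment is $t_k=T_{k1}-c^{-1}|s-\tilde{x}_1|$.

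The obstacle I expect is exactly this degenerate case, which is the singular configuration mentioned in the introduction. The convexity argument turns it into a clean geometric statement, namely that degeneracy occurs iff $s$ is on the axis outside the segment, and the monotonicity of $\sqrt{x^2+d^2}-|x|$ then settles it.
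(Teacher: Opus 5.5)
Your argument is correct, and it takes a genuinely different route from the paper.

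The paper argues geometrically. It intersects the loci defined by $\Delta t_{21}$ and $\Delta t_{31}$ (bisector planes, hyperboloid sheets, axis rays) over six cases and their subcases, obtaining a point, an open ray, or a circle perpendicular to the $x$-axis. It then cuts this set with the $\Delta t_{41}$-locus to get at most two points mirror-symmetric about the $xOy$ plane, and finally uses $\tilde{x}_5$ to break the symmetry.

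You instead reuse the linearization $Bu=b$, which the paper only employs for the four-point non-uniqueness result. For this configuration the $4\times 4$ system in $(x,y,z,r_1)$ is block triangular with determinant $16df(aR_{31}+bR_{21})$. All the work therefore collapses onto the single singular set $aR_{31}+bR_{21}=0$. Your convexity argument identifies that set exactly as the two rays of the $x$-axis outside $(\tilde{x}_3,\tilde{x}_2)$. There, the sign of $\Delta t_{21}$ together with the strictly monotone quantity $R_{41}$ finishes the proof.

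What your route buys:
\begin{itemize}
\item It is shorter and treats all combinations of data values uniformly.
\item It gives an explicit closed-form reconstruction in the generic case.
\item The special geometry enters only through the factorization of the determinant and the description of its zero set.
\item As a byproduct, it shows that the denominator $2c(a\Delta t_{31}+b\Delta t_{21})$ in the paper's formula for $x_2$ never vanishes in Case 4, a point the paper does not address.
\end{itemize}
The paper's route, in exchange, makes visible which subsets of observers already suffice in which configuration, and what role each sensor plays.

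One correction to your closing remark. The singular case mentioned in the introduction is a source on a perpendicular bisector plane ($\Delta t_{21}=0$ or $\Delta t_{31}=0$), and that is not your degenerate set. In your framework that case is harmless. The $2\times 2$ determinant then reduces to $4aR_{31}$ or $4bR_{21}$, and $R_{21}$ and $R_{31}$ cannot both vanish.
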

	
	\begin{proof}
		
		From physical background, the point source $s$ exists.
		
		\textbf{Step 1}: According to the time differences of arrival $\Delta t_{21} \mbox{ and } \Delta t_{31}$, the source $s$ satisfy
		\begin{numcases}{ }
			|s - \tilde{x}_2| - |s - \tilde{x}_1| = c\Delta t_{21},  \label{eq:t21} \\
			|s - \tilde{x}_3| - |s - \tilde{x}_1| = c\Delta t_{31}.  \label{eq:t31}
		\end{numcases}
		
		Case 1: $\Delta t_{21} = 0$ and $\Delta t_{31} = 0$. 
		
		Both equations define perpendicular bisector planes of $\tilde{x}_1\tilde{x}_2$ and $\tilde{x}_1\tilde{x}_3$. Since $\tilde{x}_1, \tilde{x}_2$, and $\tilde{x}_3$ are distinct and collinear on the $x$-axis, these planes are parallel and disjoint. This case is excluded by the physical background.
		
		Case 2: Either $\Delta t_{21} = 0$ with $0 < |\Delta t_{31}| < \frac{|\tilde{x}_1-\tilde{x}_3|}{c}$, or $0 < |\Delta t_{21}| < \frac{|\tilde{x}_1-\tilde{x}_2|}{c}$ with $\Delta t_{31} = 0$. 
		
		Without loss of generality, we focus on the first one. As shown in Figure~\ref{g1}, formula~\eqref{eq:t21} gives the perpendicular bisecting plane of $\tilde{x}_1 \tilde{x}_2$, and formula~\eqref{eq:t31} describes one sheet of the two-sheet hyperboloid with foci $\tilde{x}_1$, $\tilde{x}_3$. The three collinear distinct points on the x-axis imply that the intersection is a circle perpendicular to the $x$-axis: 
		\[y^2 + z^2 = m_1^2, \; x = \frac{a}{2} \neq 0,\]
		where 
		\[
		m_1^2 = \frac{\left(b^2 - (c \cdot \Delta t_{31})^2\right)\left((a+b)^2 - (c \cdot \Delta t_{31})^2\right)}{4(c \cdot \Delta t_{31})^2}.
		\]
		\begin{figure}[htbp]
			\centering
			\begin{minipage}[b]{0.45\textwidth}
				\centering
				\includegraphics[width=0.65\textwidth]{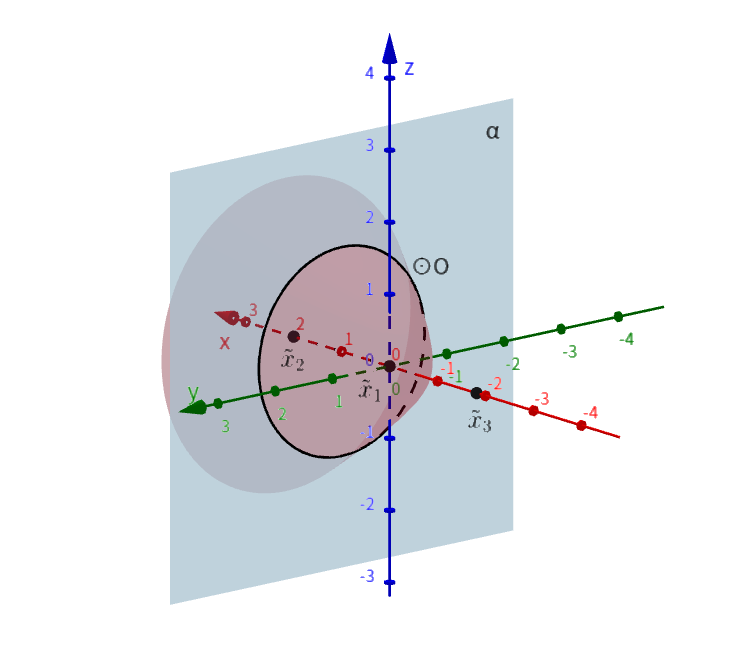}
				\caption{The hyperboloid and the perpendicular bisector plane intersect in $\odot O$.}
				\label{g1}
			\end{minipage}
			\hfill
			\begin{minipage}[b]{0.45\textwidth}
				\centering
				\noindent\includegraphics[width=0.88\textwidth]{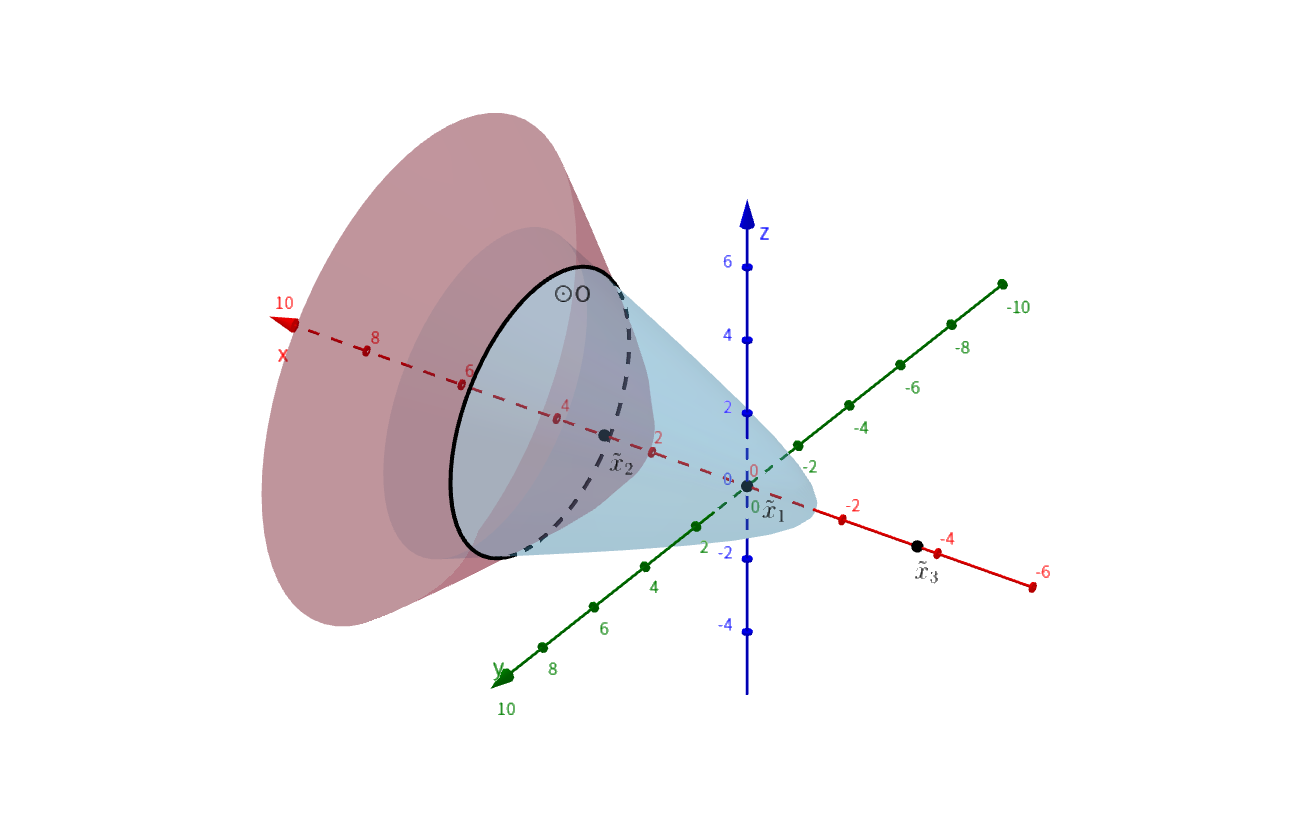}
				\caption{The two hyperboloids intersect in $\odot O$.}
				\label{g2}
			\end{minipage}
		\end{figure}
		
		Case 3: $\Delta t_{21} = 0$ with $|\Delta t_{31}| = \frac{|\tilde{x}_1-\tilde{x}_3|}{c}$,or $|\Delta t_{21}| = \frac{|\tilde{x}_1-\tilde{x}_2|}{c}$ with $\Delta t_{31} = 0$. 
		
		Without loss of generality, we only discuss the former. Formula~\eqref{eq:t21} defines the perpendicular bisecting plane of the line segment $\tilde{x}_1\tilde{x}_2$. 
		
		\begin{enumerate}[label=(\arabic*), leftmargin=*]
			\item If $\Delta t_{31} > 0$, formula~\eqref{eq:t31} describes a ray emanating from $\tilde{x}_1$ along the positive x-axis. This ray intersects the plane at a point on the positive x-axis, which gives a unique solution.
			\item If $\Delta t_{31} < 0$, formula~\eqref{eq:t31} describes a ray emanating from $\tilde{x}_3$ along the negative x-axis. The ray does not intersect the plane, so the assumption is invalid.
		\end{enumerate}
		
		Case 4: $0 < |\Delta t_{21}| < \frac{|\tilde{x}_1-\tilde{x}_2|}{c}$ and $0 < |\Delta t_{31}| < \frac{|\tilde{x}_1-\tilde{x}_3|}{c}$. 
		
		As shown in Figure~\ref{g2}, since $\tilde{x}_1, \tilde{x}_2$, and $\tilde{x}_3$ are collinear along the $x$-axis, each formula defines one branch of a two-sheet hyperboloid of revolution about the $x$-axis. For distinct $\tilde{x}_1, \tilde{x}_2, \tilde{x}_3$, if the two hyperboloids intersect, their intersection is at most a circle perpendicular to the $x$-axis: 
		\[y^2 + z^2 = m_2^2, \; x = x_2,\]
		where 
		\begin{equation}\label{eq:x2}
			x_2 = \frac{a^2(c\Delta t_{31}) - b^2(c\Delta t_{21}) + c^3\Delta t_{21}\Delta t_{31}(\Delta t_{31} - \Delta t_{21})}{2c(a\Delta t_{31} + b\Delta t_{21})},
		\end{equation}
		and 
		\[m_2^2 = \left[ \frac{a^2 - (c\Delta t_{21})^2}{2c\Delta t_{21}} - \frac{a}{c\Delta t_{21}}x_0 \right]^2 - x_0^2.\]
		If two hyperboloids intersect at only one point, then uniqueness is determined. If two hyperboloids do not intersect, it is known from the physical background that the assumption does not hold.
		
		Case 5: $0 < |\Delta t_{21}| < \frac{|\tilde{x}_1-\tilde{x}_2|}{c}$ with $|\Delta t_{31}| = \frac{|\tilde{x}_1-\tilde{x}_3|}{c}$, or $|\Delta t_{21}| = \frac{|\tilde{x}_1-\tilde{x}_2|}{c}$ with $0 < |\Delta t_{31}| < \frac{|\tilde{x}_1-\tilde{x}_3|}{c}$. 
		
		Without loss of generality, and we only discuss the former. Formula~\eqref{eq:t21} defines one sheet of the two-sheet hyperboloid with foci $\tilde{x}_1$, $\tilde{x}_2$, which intersects the $x$-axis on its positive half.
		
		\begin{enumerate}[label=(\arabic*), leftmargin=*]
			\item If $\Delta t_{31} > 0$, formula~\eqref{eq:t31} represents a ray emanating from $\tilde{x}_1$ along the positive $x$-axis. This ray intersects the hyperboloid at exactly one point.
			\item If $\Delta t_{31} < 0$, formula~\eqref{eq:t31} represents a ray emanating from $\tilde{x}_3$ along the negative $x$-axis. The ray has no intersection with the hyperboloid, so the assumption does not hold.
		\end{enumerate}
		
		Case 6: $|\Delta t_{21}| = \frac{|\tilde{x}_1-\tilde{x}_2|}{c}$ and $|\Delta t_{31}| = \frac{|\tilde{x}_1-\tilde{x}_3|}{c}$.
		
		\begin{enumerate}[label=(\arabic*), leftmargin=*]
			\item If $\Delta t_{21} > 0$ and $\Delta t_{31} > 0$, formula~\eqref{eq:t21} defines a ray emanating from $\tilde{x}_1$ along the negative $x$-axis, and formula~\eqref{eq:t31} defines a ray emanating from $\tilde{x}_1$ along the positive $x$-axis. Since $\tilde{x}_1 \notin \Gamma$, this case is excluded by the problem assumption.
			
			\item If $\Delta t_{21} > 0$ and $\Delta t_{31} < 0$, formula~\eqref{eq:t21} defines a ray emanating from $\tilde{x}_1$ along the negative $x$-axis, and formula~\eqref{eq:t31} defines a ray emanating from $\tilde{x}_3$ along the negative $x$-axis. Since $\tilde{x}_3 \notin \Gamma$, the intersection is an open ray starting at $\tilde{x}_3$ and extending along the negative $x$-axis.
			
			\item If $\Delta t_{21} < 0$ and $\Delta t_{31} > 0$, formula~\eqref{eq:t21} defines a ray emanating from $\tilde{x}_2$ along the positive $x$-axis, and formula~\eqref{eq:t31} defines a ray emanating from $\tilde{x}_1$ along the positive $x$-axis. Since $\tilde{x}_2 \notin \Gamma$, the intersection is an open ray starting at $\tilde{x}_2$ and extending along the positive $x$-axis.
			
			\item If $\Delta t_{21} < 0$ and $\Delta t_{31} < 0$, formula~\eqref{eq:t21} defines a ray emanating from $\tilde{x}_2$ along the positive $x$-axis, and formula~\eqref{eq:t31} defines a ray emanating from $\tilde{x}_3$ along the negative $x$-axis. The two rays have no intersection, so the assumption does not hold.
		\end{enumerate}
		
		From the above discussion, Cases $2$ and $4$ each give a circle perpendicular to the $x$-axis. In Case $6$, when $\Delta t_{21}$ and $\Delta t_{31}$ have opposite signs, a open ray is obtained. The remaining cases either have a unique solution or are rejected due to physical background and problem assumption. 
		
		\textbf{Step 2}: According to the time difference of arrival $\Delta t_{41}$, the point source $s$ should satisfy
		\begin{equation}\label{eq:t41}
			|s - \tilde{x}_4| - |s - \tilde{x}_1| = c\Delta t_{41}.
		\end{equation}
		We analyze the undetermined cases.

		Case 1'(open ray from Case 6): The point source lies on an open ray. Without loss of generality, we discuss (2). In this case, the point source should lie on the open ray starting from $\tilde{x}_3$ and pointing toward the negative $x$-axis.
		\begin{enumerate}[label=(\arabic*), leftmargin=*]
			\item If $\Delta t_{41} = 0$, formula~\eqref{eq:t41} defines the perpendicular bisecting plane of the line segment $\tilde{x}_1\tilde{x}_4$. This plane is parallel to the $xOz$ and has no intersection with the $x$-axis. Therefore, this case is excluded by the physical background.
			\item If $0 < |\Delta t_{41}| < \frac{|\tilde{x}_1-\tilde{x}_4|}{c}$, formula~\eqref{eq:t41} defines one sheet of the two-sheet hyperboloid with foci $\tilde{x}_1$, $\tilde{x}_4$, which is symmetric about the $y$-axis. Substituting $y = 0$ and $z = 0$ into formula~\eqref{eq:t41} yields
			\[\sqrt{(x - x_4)^2} - \sqrt{x^2} = c \Delta t_{41}.\]
			Solving this gives
			\[s_1 = (x_0,0,0), \; s_2 = (-x_0,0,0).\]
			Due to the constraints in (2) of Case 6, if $x_0 > b$, then a unique solution $s_1$ is determined. If $x_0 < b$, there is no intersection, and this case is excluded by the physical background.
			\item If $|\Delta t_{41}| = \frac{|\tilde{x}_1-\tilde{x}_4|}{d}$, formula~\eqref{eq:t41} either defines a ray starting from $\tilde{x}_1$ along the negative $y$-axis, or a ray starting from $\tilde{x}_4$ along the positive $y$-axis. These rays have no intersection with the open ray starting from $\tilde{x}_3$ along the negative $x$-axis. Hence, this case is excluded by the physical background.
		\end{enumerate}

		Case 2' (circle from Cases 2 or 4): The circle is given by $y^2 + z^2 = m^2, \; x = x_0$, where $m^2$ is explicitly defined as:
		\[m^2 = \begin{cases} 
				m_1^2, & \text{if derived from Case 2 (where } x_0 = \frac{a}{2}\text{),} \\ 
				m_2^2, & \text{if derived from Case 4 (where } x_0 \text{ is determined by Equation~\eqref{eq:x2}),}
			\end{cases}\]
		and $m_1^2, m_2^2$ are the explicit expressions obtained in Step 1.
		\begin{enumerate}[label=(\arabic*), leftmargin=*]
			\item If $\Delta t_{41} = 0$, as shown in Figure~\ref{g3}, formula~\eqref{eq:t41} defines the perpendicular bisecting plane of the line segment $\tilde{x}_1\tilde{x}_4$. This plane is parallel to the $xOz$ plane, and its expression is $y = \dfrac{d}{2}$.
			We have
			\[(\dfrac{d}{2})^2 + z^2 = m^2.\]
			Thus,
			\[z^2 = m^2 - \dfrac{d^2}{4}.\]
			When $m^2 > \dfrac{d^2}{4}$, we obtain $s_1 = (x_0,\dfrac{d}{2},\sqrt{m^2 - \dfrac{d^2}{4}}), \; s_2 = (x_0,\dfrac{d}{2},-\sqrt{m^2 - \dfrac{d^2}{4}})$ which are symmetric with respect to the $xOy$ plane. When $m^2 = \dfrac{d^2}{4}$, we obtain $s = (x_0,\dfrac{d}{2},0)$, and the solution is uniquely determined. When $m^2 < \dfrac{d^2}{4}$, there is no solution, and this case is excluded by the physical background.
	
			\begin{figure}[htbp]
				\centering
				\begin{minipage}[b]{0.45\textwidth}
					\centering
					\noindent\includegraphics[width=0.65\textwidth]{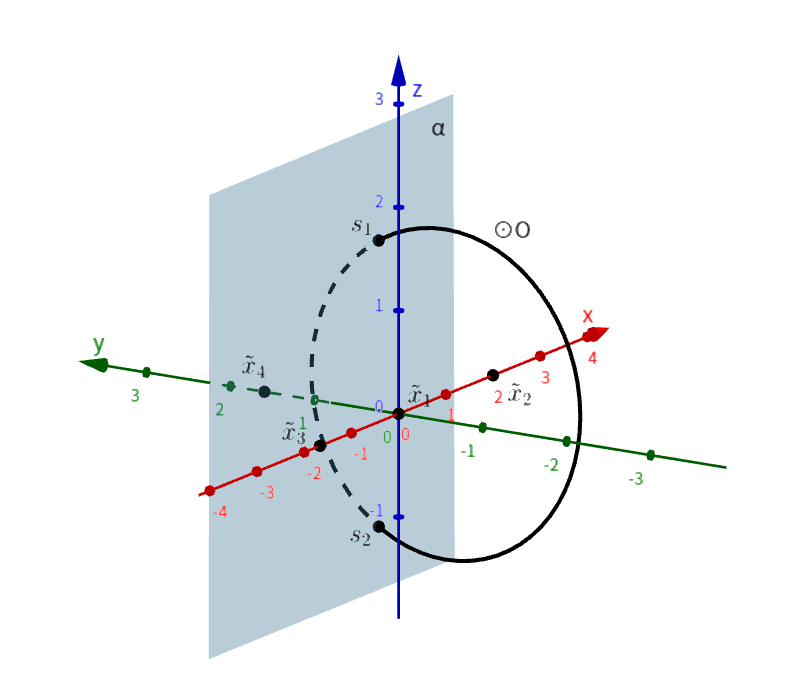}
					\caption{$\odot O $ intersect the perpendicular bisector plane at $s_1$ and $s_2$.}
					\label{g3}
				\end{minipage}
				\hfill
				\begin{minipage}[b]{0.45\textwidth}
					\centering
					\noindent\includegraphics[width=0.65\textwidth]{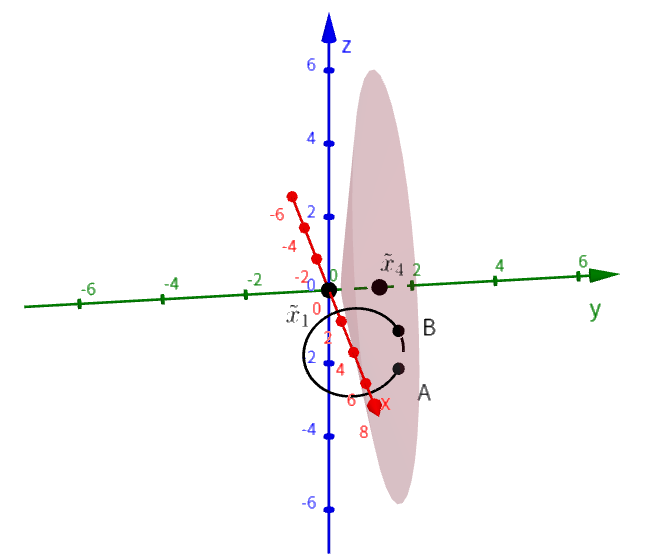}
					\caption{$\odot O $ intersect the hyperboloid at $s_1$ and $s_2$.}
					\label{g4}
				\end{minipage}
			\end{figure}
			
			\item If $0 < |\Delta t_{41}| < \frac{|\tilde{x}_1-\tilde{x}_4|}{c}$, as shown in Figure~\ref{g4}, formula~\eqref{eq:t41} defines one sheet of the two-sheet hyperboloid with foci $\tilde{x}_1$, $\tilde{x}_4$. System of formulas
			\[
			\begin{cases}
				|s - \tilde{x}_4| - |s - \tilde{x}_1| = c\Delta t_{41}, \\
				y^2 + z^2 = m^2, \; x = x_0.
			\end{cases}
			\]
			We have
			\[
			\begin{cases}
				\sqrt{(x_0 - x_4)^2 + (y - y_4)^2 + (z - z_4)^2} - \sqrt{x_0^2 + y^2 + z^2} = c\Delta t_{41},\\
				y^2 + z^2 = m^2.
			\end{cases}
			\]
			By algebraic elimination, $y$ is uniquely determined as a constant $y_0$. Substituting it into the circle's formula gives:
			\[z^2 = m^2 - y_0^2.\]
			When $m^2 > y_0^2$, we obtain $s_1 = \big(x_0, y_0, \sqrt{m^2 - y_0^2}\big)$ and $s_2 = \big(x_0, y_0, -\sqrt{m^2 - y_0^2}\big)$. The two solutions are symmetric about the $xOy$ plane. When $m^2 = y_0^2$, we have a unique solution $s = (x_0, y_0, 0)$. The inequality $m^2 < y_0^2$ leads to no solution, which is invalid given the physical background.
			
			\item If $|\Delta t_{41}| = \frac{|\tilde{x}_1-\tilde{x}_4|}{d}$, formula~\eqref{eq:t41} either defines a ray starting from $\tilde{x}_1$ along the negative $y$-axis, or a ray starting from $\tilde{x}_4$ along the positive $y$-axis. If the circle is the intersection of the two hyperboloids, then it is possible that $x = x_0 = 0$. We discuss this case separately:
			
			\begin{enumerate}[label=(\roman*)]
				\item When $\Delta t_{41} > 0$, formula~\eqref{eq:t41} defines a ray that starts at $\tilde{x}_1$ and points toward the negative $y$-axis. In this case, the circle intersects the ray at the point $\tilde{x}_1$, which is a contradiction to $\tilde{x}_1 \notin \Gamma$. Therefore, this case is excluded by the problem assumption.
				\item When $\Delta t_{41} < 0$, formula~\eqref{eq:t41} defines a ray that starts at $\tilde{x}_4$ and points toward the positive $y$-axis. In this case, the circle and the ray have no intersection, and this case is excluded by the physical background.
			\end{enumerate}
			
			If \(x = x_0 \neq 0\), the rays lie on the $y$-axis and have no intersection with the circle $y^2 + z^2 = m^2,\; x = x_0$. Therefore, this case is excluded by the physical background.
		\end{enumerate}	
		
		From the above discussion, we find that Case 2' determines two points symmetric with respect to the $xOy$. Case 1' either has a unique solution or is rejected due to physical background and problem assumption. Next, we will discuss the case where the outcome has not yet been determined.
		
		\textbf{Step 3}: According to the time difference of arrival $\Delta t_{51}$, the point source is required to satisfy 
		\begin{equation}\label{t_51}
			|s - \tilde{x}_5| - |s - \tilde{x}_1| = c\Delta t_{51}.
		\end{equation}
		If there is no unique solution, this implies that $s_1$ and $s_2$ must both satisfy formula~\eqref{t_51}. Substituting them gives
		\[
		\begin{cases}
			\sqrt{(x_0 - x_5)^2 + (y_0 - y_5)^2 + (z_0 - z_5)^2} = c\Delta t_{51} + \sqrt{x_0^2 + y_0^2 + z_0^2},\\
			\sqrt{(x_0 - x_5)^2 + (y_0 - y_5)^2 + (-z_0 - z_5)^2} = c\Delta t_{51} + \sqrt{x_0^2 + y_0^2 + z_0^2}.
		\end{cases}
		\]
		Solving the system we obtain
		\[z_0 = 0 \; \text{or} \; z_5 = 0.\] 
		When $z_0 = 0$, the point sources $s_1$ and $s_2$ coincide, which is a contradiction. When $z_5 = 0$, the observation points $\tilde{x}_1$ and $\tilde{x}_5$ coincide, which is also a contradiction. 
		
		In summary, the five observation points uniquely determine the position of the point source. Depending on the specific values of the time differences of arrival, the uniqueness can be systematically divided into the following four cases:
		\begin{enumerate}[label={(\roman*)}]
			\item {\bf Uniqueness by the first three observers ($\tilde{x}_1, \tilde{x}_2, \tilde{x}_3$)} \\
			Under Case 3(1) or Case 5(1), the geometric loci intersect at a single point on the $x$-axis, yielding a unique solution immediately.
			
			\item \textbf{Uniqueness by adding the fourth observer ($\tilde{x}_4$) on an open ray} \\
			Under Case 6(2) or Case 6(3), the first three observers yield an open ray, which is cut at a unique point by the two-sheet hyperboloid with foci $\tilde{x}_1$, $\tilde{x}_4$ (when $x_0 > b$). 
			
			\item \textbf{Uniqueness by adding the fourth observer ($\tilde{x}_4$) on a tangent circle} \\
			When the first three observers determine a spatial circle, if $m^2 = \frac{d^2}{4}$ or $m^2 = y_0^2$, the circle is tangent to the surface from $\tilde{x}_4$, fixing a unique solution on the $xOy$ plane.
			
			\item \textbf{Uniqueness by adding the fifth observer ($\tilde{x}_5$) to break symmetry} \\
			When the circle and the surface from $\tilde{x}_4$ intersect at two symmetric points ($m^2 > \frac{d^2}{4}$ or $m^2 > y_0^2$), the fifth observer $\tilde{x}_5$ on the $z$-axis breaks the symmetry to give the final unique solution.
		\end{enumerate}
		
		Thus, the emission moment $t_k$ is also uniquely determined by \[t_k = T_{k1} - c^{-1}|s - \tilde{x}_1|. \qedhere\]
	\end{proof}

	\section{Algorithm}\label{alg}
	
	In this section, we present a novel sampling method to calculate the locations $a(t_k)$ of the point source and the emission moments $t_k$, $k=1,\cdots,K$. 
	
	Let $\Omega \coloneqq [x_{\min},x_{\max}] \times [y_{\min},y_{\max}] \times [z_{\min},z_{\max}]$ be a bounded closed domain in $\mathbb{R}^3$. Partition $\Omega$ with a discrete grid step size $h > 0$:
	\[
	\begin{aligned}
		x_p &= x_{\min} + p \cdot h, \quad p = 0, 1, \dots, K_x, \\
		y_q &= y_{\min} + q \cdot h, \quad q = 0, 1, \dots, K_y, \\
		z_r &= z_{\min} + r \cdot h, \quad r = 0, 1, \dots, K_z,
	\end{aligned}
	\]
	where $K_x = \dfrac{x_{\max} - x_{\min}}{h}$, and similarly for $K_y, \; K_z$. The total number of grid points is $(K_x+1)(K_y+1)(K_z+1)$. Denote the grid points in $\Omega$ by $s_{pqr}$. 
	
	For any grid point $s_{pqr}\in \Omega$, using the relation 
	\begin{equation}
		|s - \tilde{x}_i| - |s - \tilde{x}_j| = c\Delta t_{ij},\label{eq:tij}
	\end{equation}
	we define a auxiliary indicator function 
	\[I_{ij}(s_{pqr}) \coloneqq \big| |s_{pqr}-\tilde{x}_i|-|s_{pqr}-\tilde{x}_j|-c\Delta t_{ij} \big|.\]
	The auxiliary indicator function points out the absolute difference between the theoretical distance difference and the measured distance difference at $s_{pqr}$. 
	
	\begin{lemma}
		For the formula~\eqref{eq:tij} and any spatial point $s_{pqr} \in \Omega$, we have
		\[
		I_{ij}(s_{pqr}) = \begin{cases}
			0, & \text{$s_{pqr}$ satisfies formula~\eqref{eq:tij}}, \\
			> 0, &\text{otherwise}.
		\end{cases}
		\]
	\end{lemma}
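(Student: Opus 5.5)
The claim follows directly from the definition of $I_{ij}$ as an absolute value, so the plan is a short two-case argument. For a fixed grid point $s_{pqr}\in\Omega$ and fixed indices $i,j$, introduce the residual
\[
\rho_{ij}(s_{pqr}) \coloneqq |s_{pqr}-\tilde{x}_i|-|s_{pqr}-\tilde{x}_j|-c\Delta t_{ij}.
\]
This is a well-defined real number, since the Euclidean distances are finite and $c\Delta t_{ij}$ is a measured real constant. By definition, $I_{ij}(s_{pqr})=|\rho_{ij}(s_{pqr})|$. The whole argument rests on one elementary fact: for every real $r$, $|r|\ge 0$, and $|r|=0$ if and only if $r=0$.

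\emph{Case 1: $s_{pqr}$ satisfies formula~\eqref{eq:tij}.} Then $|s_{pqr}-\tilde{x}_i|-|s_{pqr}-\tilde{x}_j|=c\Delta t_{ij}$. This says exactly that $\rho_{ij}(s_{pqr})=0$, and hence $I_{ij}(s_{pqr})=0$.

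\emph{Case 2: $s_{pqr}$ does not satisfy formula~\eqref{eq:tij}.} Then the equality fails, so $\rho_{ij}(s_{pqr})\neq 0$. Consequently $I_{ij}(s_{pqr})=|\rho_{ij}(s_{pqr})|>0$.

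The two cases are exhaustive and mutually exclusive, which gives the stated dichotomy. The argument uses neither the geometry of the observation points nor Lemma~\ref{time-s}. The only point needing care is to read ``satisfies formula~\eqref{eq:tij}'' as exact equality at the given point, rather than approximate equality on the grid. With that reading, $I_{ij}$ is precisely the zero-set indicator of the locus (plane, hyperboloid sheet, or ray) described in the earlier lemma. This is the property later used to build the sampling indicator.
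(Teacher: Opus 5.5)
Your proof is correct and follows the paper's own argument: both split on whether $s_{pqr}$ satisfies \eqref{eq:tij} exactly, and conclude from the fact that the absolute value of a real number vanishes if and only if the number is zero. Your residual $\rho_{ij}$ is only a notational repackaging of those same two cases.
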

	
	\begin{proof}
		If $s_{pqr}$ satisfies formula~\eqref{eq:tij}, then $|s_{pqr} - \tilde{x}_i| - |s_{pqr} - \tilde{x}_j| = c\Delta t_{ij}$. Hence $I_{ij}(s_{pqr}) = \big| |s_{pqr}-\tilde{x}_i|-|s_{pqr}-\tilde{x}_j|-c\Delta t_{ij} \big| = 0$. If $s_{pqr}$ does not satisfy formula~\eqref{eq:tij}, then $|s_{pqr} - \tilde{x}_i| - |s_{pqr} - \tilde{x}_j| \neq c\Delta t_{ij}$. Hence $I_{ij}(s_{pqr}) = \big| |s_{pqr}-\tilde{x}_i|-|s_{pqr}-\tilde{x}_j|-c\Delta t_{ij} \big| > 0$.
	\end{proof}
	
	Since a single auxiliary indicator function $I_{ij}$ only determines one hyperboloid and cannot uniquely determine the position of the point source, then we define a indicator function
	\[I(s_{pqr}) \coloneqq [I_{21}(s_{pqr}) + I_{31}(s_{pqr}) + I_{41}(s_{pqr}) + I_{51}(s_{pqr})]^{-1}.\]
	
	\begin{theorem}
		Let the source position be denoted by $s^*$. For any $\tilde{y} \in \Omega$, we have
		\[
		I(\tilde{y}) = \begin{cases}
			+\infty, & \tilde{y} = s^*, \\
			\text{a finite positive number}, &\tilde{y} \neq s^*.
		\end{cases}
		\]
	\end{theorem}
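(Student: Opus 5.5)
The plan is to reduce the statement to the preceding lemma on the auxiliary indicators $I_{ij}$ and the uniqueness theorem for five observation points. Set $J(\tilde y) := I_{21}(\tilde y)+I_{31}(\tilde y)+I_{41}(\tilde y)+I_{51}(\tilde y)$, so that $I(\tilde y) = J(\tilde y)^{-1}$. The convention we adopt is $0^{-1} = +\infty$. The claim then becomes: $J(s^*) = 0$, and $J(\tilde y) \in (0,\infty)$ for every $\tilde y \in \Omega$ with $\tilde y \neq s^*$.

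\textbf{Step 1 (the true source).} Since $s^*$ is the actual source, the measured differences are by definition $c\Delta t_{i1} = |s^*-\tilde x_i| - |s^*-\tilde x_1|$ for $i=2,\dots,5$. So $s^*$ satisfies each relation \eqref{eq:tij} with $j=1$. By the lemma above, $I_{i1}(s^*) = 0$ for each $i$, hence $J(s^*) = 0$ and $I(s^*) = +\infty$. Strictly, the lemma is stated for grid points $s_{pqr}$, but its proof uses nothing about the grid and applies verbatim to any $\tilde y \in \Omega$.

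\textbf{Step 2 (any other point).} Take $\tilde y \neq s^*$. Each $I_{i1}(\tilde y)$ is nonnegative by definition, so $J(\tilde y) \ge 0$. Suppose $J(\tilde y) = 0$. Then every summand vanishes, so $\tilde y$ satisfies all four TDOA relations $|\tilde y-\tilde x_i| - |\tilde y - \tilde x_1| = c\Delta t_{i1}$, $i=2,\dots,5$. By the uniqueness theorem for five observation points, the source position is the only point satisfying all four relations. Hence $\tilde y = s^*$, a contradiction, and therefore $J(\tilde y) > 0$.

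Finiteness follows because $\Omega$ is bounded and the $\tilde x_i$ are fixed: each $I_{i1}(\tilde y) \le |\tilde x_i - \tilde x_1| + c|\Delta t_{i1}| < \infty$ by the triangle inequality. Thus $J(\tilde y) \in (0,\infty)$ and $I(\tilde y)$ is a finite positive number.

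The only substantive step is the appeal to uniqueness in Step 2. Two points need checking there. First, the configuration of $\tilde x_1,\dots,\tilde x_5$ must be the one assumed in the uniqueness theorem, so the theorem's hypotheses have to be imported into this statement. Second, uniqueness must be read as "the solution set of the four equations is the single point $\{s^*\}$". That theorem is phrased for the TDOAs $\Delta t_{21},\dots,\Delta t_{51}$ generated by $s^*$, so its conclusion is exactly that any point satisfying the same TDOAs coincides with $s^*$. I would state this reading explicitly.
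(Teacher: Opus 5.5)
Your proof is correct and takes the same route as the paper: the true source makes all four residuals $I_{i1}$ vanish, and any other point must leave at least one of them nonzero. The paper asserts the second step without justification. You correctly identify that it depends on the five-observer uniqueness theorem, including that theorem's sensor configuration, and you also check the finiteness of the sum, both of which the paper leaves implicit.
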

	
	\begin{proof}
	 If $y = s^*$, then $I_{21} = I_{31} = I_{41} = I_{51} = 0$, so that the indicator function value $I(\tilde{y}) = +\infty$. If $y \neq s^*$, then at least one of $I_{21}, I_{31}, I_{41}, I_{51}$ is nonzero, so that $I(\tilde{y})$ is a finite positive number.
	\end{proof}
	
	\begin{algorithm}[H]
		\caption{}
		{\bf Step 1:} Set the search domain $\Omega = [x_{\min}, x_{\max}] \times [y_{\min}, y_{\max}] \times [z_{\min}, z_{\max}]$ and then divide $\Omega$ using discrete grid step size.
		
		\noindent{\bf Step 2:} Calculate the theoretical distance difference in advance.
		\[
		d_i = c \cdot \Delta t_{i1}, \quad i = 2, 3, 4, 5.
		\]
		
		\noindent{\bf Step 3:} For each grid point $s_{pqr} = (x_p, y_q, z_r)$:
		\begin{enumerate}
			\item Compute the distance to the observation points $\tilde{x}_i$ for each $i = 1, 2, 3, 4, 5$,
			\[
			r_i(s_{pqr}) = |s_{pqr} - \tilde{x}_i|.
			\]
			\vspace{-25pt}
			\item Compute the distance differences from the reference observation point to the other observation points for each $i = 2, 3, 4, 5$,
			\[
			e_i(s_{pqr}) = | r_i(s_{pqr}) - r_1(s_{pqr}) - d_{i-1} |.
			\]
			
			\item Cumulative total error
			\[
			E(s_{pqr}) = \sum_{i=2}^{5} e_i(s_{pqr}).
			\]
			
			\item Compute the indicator function values
			\[
			I(s_{pqr}) = \frac{1}{E(s_{pqr})}.
			\]
		\end{enumerate}
		
		\noindent{\bf Step 4:} Plotting.
	\end{algorithm}

	\section{Numerical Examples}\label{num}
	
	In this section, we present four numerical examples to validate the proposed algorithm in three dimensional space. In the first example, we show the reconstruction results of a single point source under different noise levels. In the second example, we present the reconstruction results for two point sources under different noise levels. In the third example, we compare the actual trajectory of a moving point source traveling along a spatial sine curve with the reconstructed position of the point source. In the fourth example, our moving trajectory is a spiral curve, and we also compare the actual trajectory with the reconstructed position of the point source. 
	
	All examples are implemented using the same observation point configuration: $\tilde{x}_1 = (0,0,0), \; \tilde{x}_2 = (3,0,0), \; \tilde{x}_3 = (-3,0,0), \; \tilde{x}_4 = (0,3,0), \; \tilde{x}_5 = (0,0,3)$. The wave speed is set to $c = 1$, and the discrete grid step size $h$ is 0.05. The reconstruction error refers to the absolute difference $e = |s - s_r|$ between the reconstructed source position $s_r$ and the true source position $s$ for a single fixed point source. The mean reconstruction error is defined as the mean of the single-point errors over all discrete emission moments, which is given by $e = \frac{1}{N} \sum_{k=1}^{N} |s^{(k)} - s_r^{(k)}|$ where $N$ is the total number of discrete emission moments, and $s^{(k)}$ and $s_r^{(k)}$ denote the true and reconstructed source positions corresponding to the $k$-th emission moment, respectively.

	\textbf{Example 1 (Single fixed point source).} Consider a fixed point source $s = (2,1,-1)$ and domain $\Omega = [-2,3] \times [-2,3] \times [-3,2]$. Figure~\ref{fig1_} shows the reconstruction results of the point source obtained using the indicator function values under noise levels of $0\%, \; 10\%, \; 20\%, \; 30\%$.
	
	\begin{figure}[htbp]
		\hspace*{-0.6cm}
		\centering
		\noindent\includegraphics[width=1.1\textwidth]{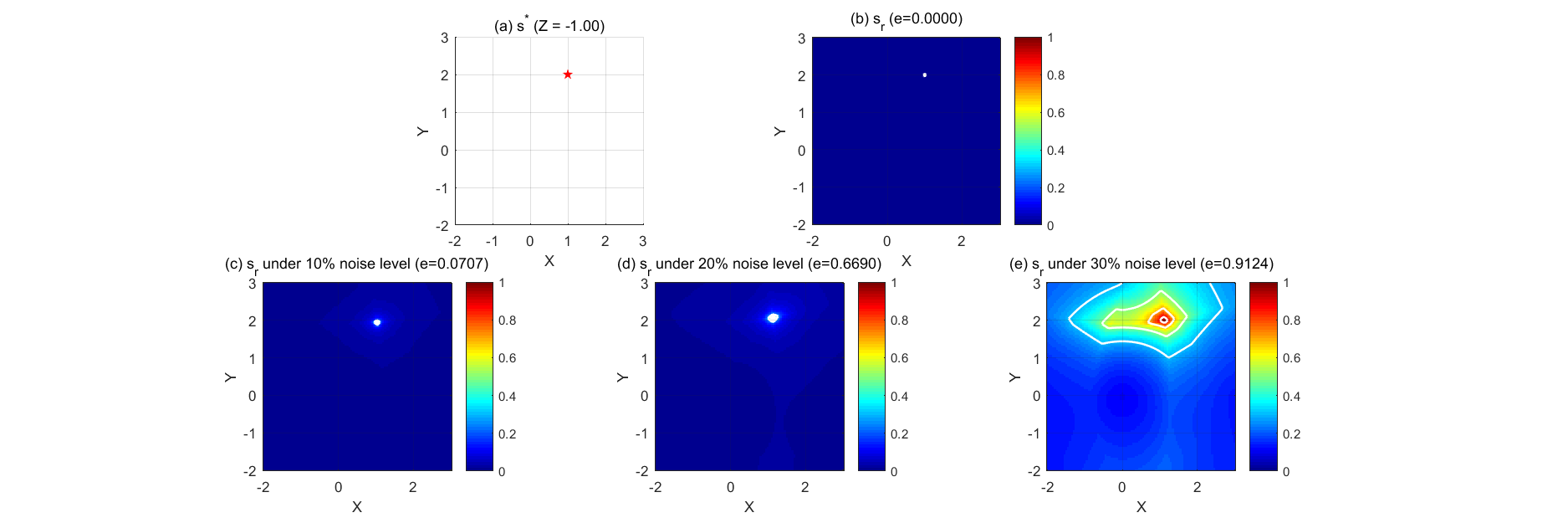}
		\caption{Reconstruction of a fixed point source with five observation points. (a)true source location; (b)(c)(d)(e) reconstruction position and corresponding reconstruction error with different noise level, where (b) $0\%$, (c) $10\%$, (d) $20\%$, (e) $30\%$.}
		\label{fig1_}
	\end{figure}
	
	Figure~\ref{fig1_} presents the reconstruction results for a single point source under different noise levels, providing a visual illustration of how different noise conditions affect the algorithm. In the figure, $e$ represents the reconstruction error. Figure~\ref{fig1_}(a), the true source position is marked by a red five-pointed star, serving as a reference for the inversion results obtained under different noise levels. From the figure, it can be seen that in the absence of noise, the result obtained by the sampling method is very accurate, and the imaging contrast is quite clear. There is only a single isolated bright spot, and the indicator function values in other regions are nearly zero. Since the true point source is selected exactly on a grid node, the reconstruction error is zero in the noise-free case. At a noise level of $10\%$, the peak values of the indicator function remain concentrated without significant shift, and the response values in other regions remain low. The reconstructed result is located near the true source, with an error of only 0.0707. This indicate that the algorithm has good noise resistance. When the noise level increases to $20\%$, 
	we observed that the highlight area expand significantly and is no longer an isolated bright spot. However, the peak does not shift noticeably, and only mild interference appearing in the surrounding area. This suggests that the algorithm's stability is affected by noise, but it is still able to locate the point source fairly accurately. At a noise level of $30\%$, both the highlighted region and red highlighted area expand further, indicating that the source localization becomes blurred and is no longer as accurate as before. The imaging contrast decreases markedly, and the interference region becomes too large, resulting in a final error of 0.9124. This shows that excessive noise levels seriously affect the imaging quality and stability of the algorithm.
	
	\textbf{Example 2 (Two fixed point sources).} Consider two point sources $s_1 = (2,1,3), \; s_2 = (-1,-2,-1)$ at different emission moments with domain $\Omega = [-2,3] \times [-3,2] \times [-2,4]$. Figure~\ref{fig2_} shows the reconstruction results of the point sources using the indicator function values under four noise levels: $0\%, \; 10\%, \; 20\%, and \;30\%$. This visually demonstrates the performance of point source localization as the noise level changes.
	
	\begin{figure}[htbp]
		\hspace*{-0.6cm}
		\centering
		\noindent\includegraphics[width=1.1\textwidth]{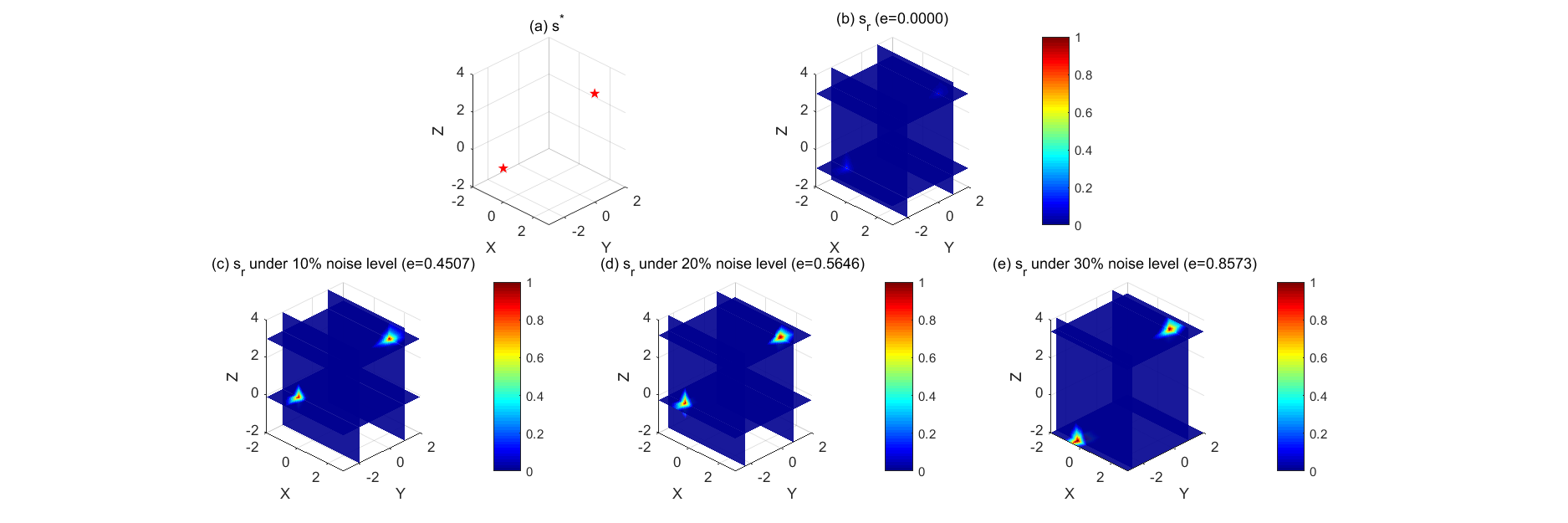}
		\caption{Reconstruction of two fixed point sources with five observation points. (a) true spatial distribution of the two point sources; (b)(c)(d)(e) reconstruction position and corresponding mean reconstruction error with different noise level, where (b) $0\%$, (c) $10\%$, (d) $20\%$, (e) $30\%$.}
		\label{fig2_}
	\end{figure}
	
	Figure~\ref{fig2_} displays the visualization of the indicator function values within the sampling region, where the color intensity reflects the response strength calculated by the localization algorithm. In the figure, $e$ represents the mean reconstruction error. In the noise-free case, the indicator function peaks only at the locations of the two true point sources. The response values in other areas are small, providing a sharp imaging contrast that accurately depicts the spatial positions. The reconstruction results are consistent with the true distribution with zero error. With  $10\%$ noise, the mean error is 0.4507, which is defined as the average distance between the two reconstructed point sources and their corresponding true positions. Although the peak areas of the indicator function expand slightly, no other peaks appear, and the overall peak positions do not shift noticeably. The two point sources can still be accurately inverted, maintaining stable localization performance. When the noise level increases to $20\%$, the mean error rises to 0.5646. The peak areas further expand, and signs of other peaks begin to emerge, leading to a significant decrease in imaging contrast. The errors caused by noise become apparent. The estimated positions can still be obtained, but the imaging quality and localization stability decline significantly. Under strong interference of $30\%$ noise, it can be seen from figure~\ref{fig2_}(e) that the high noise level completely disrupts the data. The effective distance difference constraints fail entirely, severely affecting the results. Consequently, it is no longer possible to reconstruct the positions of the point sources.
	
	Overall, as the noise level increase, the imaging quality of the point sources shows a continuous downward trend. When the noise level is low, the peak of the indicator function is well concentrated near the true source position, the imaging is stable, and the contrast is clear. However, with increasing noise levels, the image contrast gradually decreases, the peak region expands, and the mean error becomes larger. Especially when the noise reaches a high level, the reconstruction results are severely disrupted. This makes it impossible to distinguish the corresponding intensity at each grid point, ultimately leading to the failure of the moving point source localization. This shows that the algorithm can accurately invert the positions of two point sources, but it is highly sensitive to observation noise perturbations. Therefore, the noise level must be controlled within a certain range.
	
	\textbf{Example 3(Spatial sine curve trajectory).} The trajectory is given by $a(t) \coloneqq (3\cos t, 3\sin t, 2.5\sin 2t), \; t \in [0,2 \pi]$, discretized at $t_k = (k - 1) \cdot \frac{2 \pi}{30}$ for $k = 1, 2, \cdots, 30$. The domain is $\Omega = [-4,4] \times [-4,4] \times [-4,4]$. Figure~\ref{fig3_} shows the trajectory reconstruction and the corresponding error when the moving point source moves along this spatial sine curve.
	
	\begin{figure}[htbp]
		\hspace*{-0.6cm}
		\centering
		\noindent\includegraphics[width=1.1\textwidth]{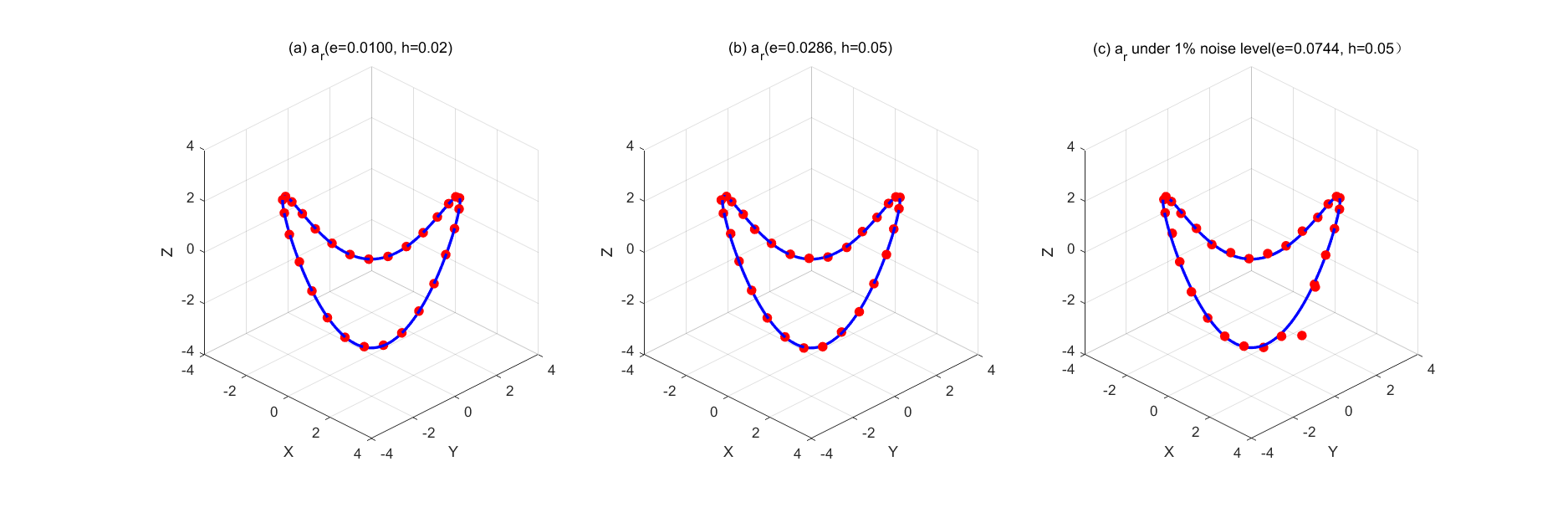}
		\caption{Reconstruction of a spatial sine curve using five observation points. (a) Reconstructed trajectory and mean error under noise-free conditions with a discrete grid step size of $h=0.02$; (b) Reconstructed trajectory and mean error under noise-free conditions with a discrete grid step size of $h=0.05$; (c)Reconstructed trajectory and mean error under $1\%$ noise with a discrete grid step size of $h=0.05$.}
		\label{fig3_}
	\end{figure}
	
	In the Figure~\ref{fig3_}, the blue solid line represents the true trajectory, and the red circles indicate the 30 reconstructed source positions. Both are shown in the same figure to facilitate comparison of the reconstruction accuracy and to observe if there are any areas with excessive errors. In the figure, $e$ is the mean reconstruction error, and $h$ is the discrete grid step size.
	
	From the reconstruction results shown in Figure~\ref{fig3_}(a) and (b), the reconstructed point sources in the noise-free conditions are almost all distributed along the true trajectory. The mean reconstruction error is only 0.0100 and 0.0286. This verifies that the algorithm can accurately reconstruct the trajectory in the absence of noise. The mean error refers to the average distance between each reconstructed point source and its corresponding true point source. The fact that the average error is not zero is mainly due to the discrete grid step size. In the computation, we evaluate the indicator function at the grid nodes, so the reconstructed source position lies on grid nodes. However, the corresponding true source positions may not fall exactly on grid nodes but lie between two grid nodes, which causes error. We observe that although both cases are noise-free, the two reconstruction results have different average errors. This is primarily because the grid size differs between the two cases. A smaller grid step size leads to a smaller average reconstruction error. If higher accuracy is required, the grid step size can be reduced. But it should be noted that this will increase the computational cost.
	
	Figure~\ref{fig3_}(c) shows the reconstruction results after adding $1\%$ relative noise to the measurements, based on a grid step size of 0.05. It can be seen that the red circles are still distributed on or near the true trajectory, with no obvious deviation. The mean reconstruction error under noisy conditions is 0.0744. Even with $1\%$ noise interference, the reconstruction results remain consistent with the true trajectory. This demonstrates that the proposed algorithm is stable with respect to measurement noise.
	
	\textbf{Example 4 (Helical trajectory).} The trajectory is given by $a(t) \coloneqq (2\cos t, 2\sin t, 0.5t), \; t \in [0,5 \pi]$, discretized at $t_k = (k - 1) \cdot \frac{5 \pi}{29}$ for $k = 1, 2, \cdots, 30$. The domain is $\Omega = [-4,4] \times [-4,4] \times [0,8]$. Figure~\ref{fig4_} shows the trajectory reconstruction and the corresponding error when the moving point source moves along this helical curve.
	
	\begin{figure}[htbp]
		\hspace*{-0.6cm}
		\centering
		\noindent\includegraphics[width=1.1\textwidth]{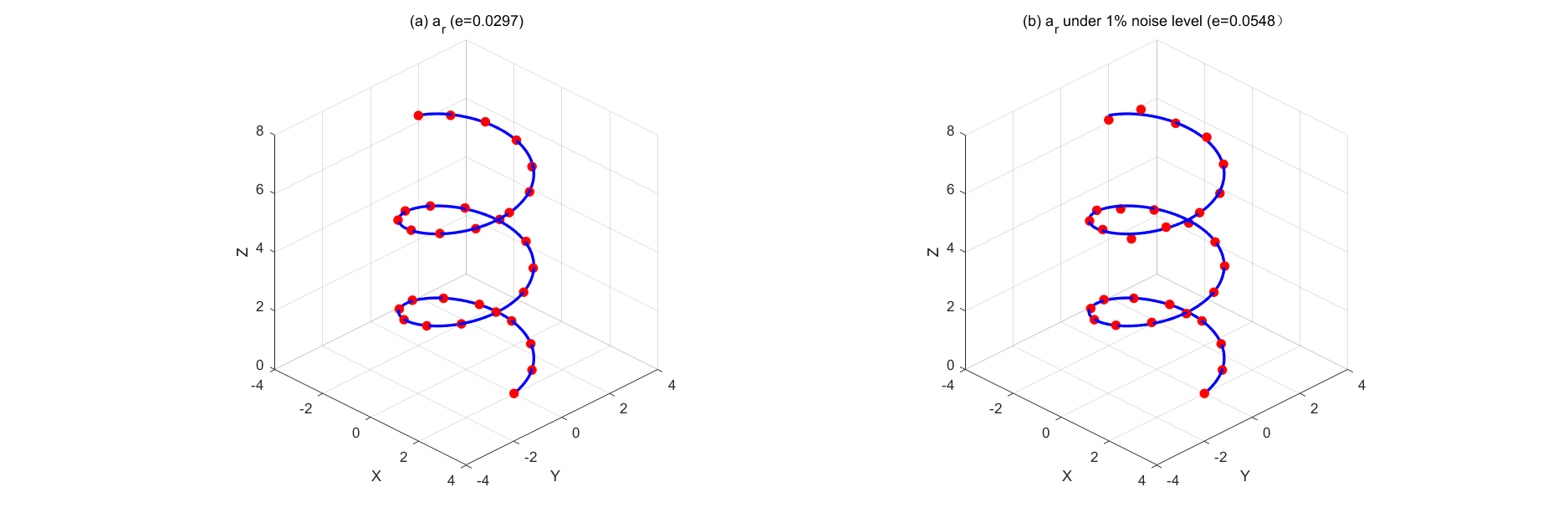}
		\caption{Reconstruction of a helical curve using five observation points. (a) Reconstructed trajectory and mean error without noise; (b) Reconstructed trajectory and mean error with $1\%$ noise.}
		\label{fig4_}
	\end{figure}
	
	In the Figure~\ref{fig4_}, the blue solid line represents the true trajectory, and the red circles indicate the 30 reconstructed source positions.
	
	From the reconstruction results shown in Figure~\ref{fig4_}(a), under noise-free conditions, the reconstructed point sources fall exactly on the true trajectory. There are no locations with large errors. This demonstrates that the proposed algorithm can accurately invert the spatial helical curve. The mean error is only 0.0297, and the error mainly comes from grid step size. Figure~\ref{fig4_}(b) shows the reconstruction results when the noise level increases to $1\%$. It can be seen that a few reconstructed point sources do not lie exactly on the true trajectory, but most of them remain on the trajectory. The average error is 0.0548, which shows that $1\%$ noise interference is relatively small and only affects the reconstruction of a few point sources. This demonstrates that the proposed algorithm has good noise resistance.
	
	\section{Conclusions and future work}\label{con}
	
	In the theoretical part, this paper combines algebraic theory with geometric knowledge. It explains the ambiguity of the solution with four observation points in three-dimensional space and proves the uniqueness of the solution with five observation points. The system of formulas constructed from time differences of arrival is linearized to $Bu = b$. For four observation points, we obtain two symmetric solutions using the rank of the coefficient matrix and geometric constraints. For five points, uniqueness is proved by combining geometric properties and proof by contradiction. The proof need to depend on special arrangements of the observation points.
	
	The proposed algorithm is based on a grid sampling approach. The indicator function \[I(s_{pqr}) \coloneqq [I_{21}(s_{pqr}) + I_{31}(s_{pqr}) + I_{41}(s_{pqr}) + I_{51}(s_{pqr})]^{-1}.\]
	is constructed based on the fact that the residual of the formula $|s^* - \tilde{x}_i| - |s^* - \tilde{x}_1| = c \Delta t_{i1}$ satisfied by the point source $s^*$ should be zero. This reduces the number of observation points and improves the computational efficiency of the algorithm. In the computation, the algorithm does not rely on the invertibility of matrices. Compared with algorithms such as the least squares method, it offers increased stability and is not affected by matrix singularity. Compared with iterative methods, our method does not require a priori knowledge of a relatively accurate initial solution, but only needs a prior exploration region. If the prior region does not contain the true source and localization fails, we can appropriately expand the prior region. 
	
	However, the algorithm does have some limitations. (1) Since waves do not satisfy Huygens' principle when propagating in a two-dimensional plane, the arrival times of a moving point source cannot be effectively measured in 2D. Consequently, the proposed algorithm is no longer applicable. (2) If there are multiple moving point sources in the space, it is difficult to separate the time of arrival corresponding to multiple point sources. Furthermore, this algorithm cannot be applied to supersonic moving point sources, as waves emitted later may catch up with or even overtake waves emitted earlier, disrupting the order of arrival and leading to inversion failure. These limitations will be addressed in our future work.


\begin{thebibliography}{99}
		
		\bibitem{FKK1999} S. Fischer, H. Koorapaty, E. Larsson, and A. Kangas, System performance evaluation of mobile positioning methods, Proc. IEEE Vehicular Technology Conference, Houston, TX, USA, May 1999.
		
		\bibitem{S1999} M. A. Spinto, Further results on GSM mobile station location, IEE Electronics Letter, 35 (1999): no. 22.
		m{CC2003} E. Choi and D. A. Cicci, Analysis of GPS static positioning problems, Applied Mathematics and Computation, 140 (2003): 37-51.
		
		\bibitem{GM2015} F. Grondin and F. Michaud, Time difference of arrival estimation based on binary frequency mask for sound source localization on mobile robots, 2015 IEEE/RSJ International Conference on Intelligent Robots and Systems (IROS), Hamburg, Germany, 2015, pp. 6149-6154.
		
		\bibitem{TOY2021} N. Tsumachi, T. Ohseki, and K. Yamazaki, Base station selection method for RAT-dependent TDOA positioning in mobile network, Proc. IEEE Radio Wireless Symp. (RWS), 2021, pp. 119-122.
		
		\bibitem{CNAS2014} M. Compagnoni, R. Notari, F. Antonacci and A. Sarti, A comprehensive analysis of the geometry of TDOA maps in localization problems, Inverse Problems, 30 (2014): 035004.
		
		\bibitem{GG2003} F. Gustafsson and F. Gunnarsson, Positioning using time-difference of arrival measurements, 2003 IEEE International Conference on Acoustics, Speech, and Signal Processing, 2003. Proceedings. (ICASSP '03), Hong Kong, China, 2003, pp. VI-553.
		
		\bibitem{LQS2012} Y. Liu, T. Qiu and H. Sheng, Time-difference-of-arrival estimation algorithms for cyclostationary signals in impulsive noise, Signal Processing, 92 (2012): 2238-2247.
		
		\bibitem{WSZG2019} P. Wu, S. Su, Z. Zuo, X. Guo, B. Sun and X. Wen, Time Difference of Arrival (TDoA) Localization Combining Weighted Least Squares and Firefly Algorithm, Sensors, 19 (2019): p. 2554.
		
		\bibitem{DZY2020} Z. Deng, H. Wang, X. Zheng, and L. Yin, Base station selection for hybrid TDOA/RTT/DOA positioning in mixed LOS/NLOS environment, Sensors, 20 (2020): p. 4132.
		
		\bibitem{TWC2022} G. Torsoli, M. Z. Win, and A. Conti, Selection of reference base station for TDOA-based localization in 5G and beyond IIoT, Proc. IEEE Globecom Workshops (GC Wkshps), 2022, pp. 317-322.
		
		\bibitem{NK2025} N. K. Inamdar, Time difference of arrival source localization: Exact linear solutions for the general 3D problem, arXiv:2501.01076.	
		
		\bibitem{WHF1976} W. H. FOY, Position-Location Solutions by Taylor-Series Estimation. IEEE Transactions on Aerospace and Electronic Systems, vol. AES-12, no. 2, pp. 187-194, March 1976.
		
		\bibitem{CH1905} Y. T. Chan and K. C. Ho, A simple and efficient estimator for hyperbolic location. IEEE Transactions on Signal Processing, vol. 42, no. 8, pp. 1905-1915, Aug. 1994.
		
		\bibitem{Su2020} Y. Su, X. Fu and N. Zhang, TDOA localization algorithm based on Lagrange constraint factor to modify the initial value of iteration, Advances in Applied Mathematics, 9 (2020): 372-381.
		
		\bibitem{TKI2022} J. Takagi, H. Kanazawa, K. Ichikawa and H. Mitamura, A simple intuitive method for seeking intersections of hyperbolas for acoustic positioning biotelemetry, PLOS ONE, 17 (2022): 1-19.
		
		\bibitem{Li2026} Q. Li, B. Chen, P. Gao, Y. Sun and Y. Sun, Reconstruction of acoustic sources from the initial arrival time of waves, Inverse Problems, 42 (2026): 035006.
		
		
		
		
	\end{thebibliography}
\end{document}